\documentclass[a4paper]{amsart}
\usepackage{bbm}
\usepackage{graphicx}
\usepackage{amssymb}
\usepackage{amsmath}
\usepackage{amsthm}
\usepackage{amscd}
\usepackage[all,2cell]{xy}
\usepackage{mathbbold}
\usepackage{mathrsfs}
\usepackage{amsfonts}
\usepackage{xypic}
\usepackage{color}

\UseAllTwocells \SilentMatrices

\newtheorem{thm}{Theorem}[section]

\newtheorem{cor}[thm]{Corollary}
\newtheorem{lem}[thm]{Lemma}

\theoremstyle{definition}
\newtheorem{Def}[thm]{Definition}
\theoremstyle{remark}
\newtheorem{rem}[thm]{\bf Remark}
\newtheorem{exm}[thm]{\bf Example}
\numberwithin{equation}{section}

\def\op{{\rm op}}

\def\U{\mathcal{U}}
\def\ot{\otimes}

\def\al{\alpha}

\def\Ext{{\rm Ext}}
\def\xr{\xrightarrow}

\def\H{{\rm H}}
\def\HH{{\rm HH}}
\def\HL{{\rm HL}}
\def\Hom{{\rm Hom}}

\def\vp{\varphi}
\def\id{{\rm id}}
\def\M{\mathcal{M}}

\def\Der{{\rm Der}}
\def\Tot{{\rm Tot}}

\def\1{\mathbbold{1}}

\def\sig{\widetilde{\sigma}}
\def\Mod{\mbox{-Mod}}

\def\sl{\mathfrak{s}\mathfrak{l}_2(k)}

\def\otL{\underset{\U(L)}\ot}

\parskip 3pt
\begin{document}
\title[ Enveloping Algebras \& Cohomology of Leibniz Pairs]
{Enveloping Algebras and Cohomology of Leibniz Pairs}

\author[Y.-H. Bao and Y.Ye] {Yan-Hong Bao\quad and Yu Ye}

\thanks{Supported by National Natural Science Foundation of China (No.11173126).}
\subjclass[2010]{16W25, 16E40}
\date{\today}

\thanks{E-mail: yhbao$\symbol{64}$ustc.edu.cn, yeyu$\symbol{64}$ustc.edu.cn}
\keywords{Leibniz pair; Enveloping algebra; Leibniz Pair cohomology}

\maketitle

\dedicatory{}%
\commby{}%

\begin{abstract}
We introduce the enveloping algebra for a Leibniz pair,
and show that the category of modules over a Leibniz pair is isomorphic to
the category of left modules over its  enveloping algebra. Consequently, we
show that the cohomology theory for a Leibniz pair introduced by Flato, Gerstenhaber and Voronov
can be interpreted by Ext-groups of modules over the  enveloping algebra.

\end{abstract}

\section{Introduction}

Leibniz pairs were introduced by Flato, Gerstenhaber and Voronov in the study of deformation theory
for Poisson algebras in \cite{FGV}.
A Leibniz pair $(A, L)$ consists of an associative algebra $A$ and a Lie algebra $L$ with an action of $L$ on $A$.
Roughly speaking, a Leibniz pair can be viewed as an infinitesimal version of an algebra with a group of operators acting on it.

An important example of a Leibniz pair comes from a smooth manifold, especially from a Poisson or symplectic manifold, where the Lie algebra of smooth vector fields acts on the algebra of smooth functions on it. Leibniz pair also arises whenever a Lie group acts on an associative algebra. For instance, an action of a Lie group $G$ on a smooth manifold $M$ naturally induces an action of the Lie algebra of $G$ on the algebra of smooth functions on $M$.

A cohomology theory for Leibniz pairs (LP-cohomology for short) was introduced in \cite{FGV},
and they showed that the LP-cohomology controls the formal deformation of Leibniz pairs.
They also defined modules over a Leibniz pair. A natural question asked in \cite{FGV} is whether the LP-cohomology can be explained by Ext-groups of modules.

In this paper, we construct for each Leibniz pair $(A, L)$ an associative algebra $\U(A, L)$,
called its \emph{enveloping algebra}. We prove the following result as given in Theorem \ref{thm of cat equiv}.

\vskip5pt
\noindent {\bf Theorem 1.}{\it
Let $(A, L)$ be a Leibniz pair and $\U(A, L)$ be its enveloping algebra. Then the category of modules over $(A, L)$ is isomorphic to the category of $\U(A, L)$-modules.}

Consequently, the category of modules over a Leibniz pair has enough projective and injective objects, which enables the usual construction of cohomology theory for a Leibniz pair by using projective or injective resolutions.

Denote by $\H_{LP}^n(A, L; M, P)$ the $n$-th LP-cohomology
group of the Leibniz pair $(A, L)$ with coefficients in an $(A, L)$-module $(M, P, \sigma)$.
By Theorem 1, the $(A, L)$-module $(M, P, \sigma)$ corresponds to a module $(P, M, \sig)$ over $\U(A, L)$.
We consider the Ext-groups of the trivial module $(k, 0, 0)$ over $\U(A, L)$ in a standard way, and
prove the following result, which shows that the LP-chomology is exactly interpreted by certain Ext-groups. This gives an affirmative answer to the question raised above. For more details we refer to Theorem \ref{char module}.

\vskip5pt
\noindent {\bf Theorem 2.}{\it Keep the above notation. Then we have isomorphisms
\[\H_{LP}^n(A, L; M, P)\cong\Ext_{\U(A, L)}^n((k,0,0),(P, M, \sig)), \]
for all $n\ge 0$.}

The paper is organized as follows. In section 2 we briefly recall some basic facts on Leibniz pairs and their modules. Section 3 deals with the construction of the  enveloping algebra for a Leibniz pair and a proof of Theorem 1 is given there. In Section 4, we will calculate the Ext-groups of the trivial module over a Leibniz pair and show the isomorphisms in Theorem 2.
In section 5 we will construct a long exact sequence and apply it to calculate LP cohomology groups.

\section{Preliminaries}

Throughout $k$ will be a fixed field of characteristic $0$, all algebras considered
are over $k$ and an associative algebra $A$ has a multiplicative identity $1_A$.
We write $\ot=\ot_k$ for simplicity.

\begin{Def}[\cite{FGV}]\label{def of LP}
A \emph{Leibniz pair}  $(A, L)$ consists of an associative algebra $A$ and a Lie algebra $L$,
connected by a Lie algebra homomorphism $\mu\colon  L \to \Der(A)$,
the Lie algebra of derivations of $A$ into itself.
\end{Def}

Usually, elements in $A$ will be denoted by $a, b, c, \cdots$ and those of $L$ by $x, y, z, \cdots$.
The Lie algebra homomorphism $\mu\colon  L \to \Der(A)$ just says that $A$ is a Lie module over $L$
with the action $\{-,-\}\colon  L \times A\to A$ given by $\{x, a\}= \mu(x)(a)$, which satisfies the Leibniz rule
\begin{align}
\{x,ab\}=a\{x,b\}+\{x,a\}b
\end{align}
for all $x\in L$ and $a,b\in A$.

\begin{rem} Recall that a \emph{noncommutative Poisson algebra} $A$ is both an associative algebra
and a Lie algebra with the Lie bracket $\{-,-\}$ satisfying the Leibniz rule
\[\{ab, c\}=a\{b,c\}+\{a,c\}b\]
for all $a, b, c\in A$, see also \cite{YYY}.
Clearly, a noncommutative Poisson algebra $A$ corresponds to a Leibniz pair $(A, A)$
together with the structure morphism $\mu$
given by setting $\mu(a)=\{a, -\}$ for all $a\in A$.
\end{rem}

\begin{Def}[\cite{FGV}]\label{def of module over LP}
Let $(A, L)$ be a Leibniz pair. A \emph{module} over $(A, L)$ means a triple $(M, P, \sigma)$,
where $P$ is a Lie module over $L$ with the action $[-,-]_\ast\colon  L\times P \to P$, $M$ is both an $A$-$A$-bimodule and a Lie module over $L$ with Lie action
$\{-,-\}_\ast\colon  L \times M \to M$, which satisfies
\begin{align}
\{x,am\}_\ast &=\{x,a\}m+a\{x,m\}_\ast,\label{2M-mod-left}\\
\{x,ma\}_\ast &=m\{x,a\}+\{x,m\}_\ast a,\label{2M-mod-right}
\end{align}
for $x\in L, m\in M, a\in A$,
and $\sigma\colon  A\otimes P \to M$ is a $k$-linear function satisfying
\begin{align}
\sigma(ab\ot \al)              & = a\sigma(b\ot \al)+\sigma(a\ot \al )b \label{sigma1} \\
\{x,\sigma(a\ot \al)\}_\ast & = \sigma(\{x,a\}\ot \al)+\sigma(a\ot [x,\al]_\ast) \label{sigma2}
\end{align}
for $a,b\in A, \al\in P$ and $x\in L$.
\end{Def}

\begin{rem} The above definition coincides with the original
one in \cite{FGV}. More precisely, let $P$ be a Lie module over $L$ and $M$ be an $A$-$A$-bimodule.
Denote by $L\ltimes P$ (\emph{resp.} $A\ltimes M$) the Lie (\emph{resp.} associative) semidirect product of $L$ and $P$ (\emph{resp.} $A$ and $M$).

Recall that a module over $(A, L)$ introduced in \cite{FGV}
means a pair $(M, P)$,
provided that $P$ is a Lie module over $L$,
$M$ is an $A$-$A$-bimodule,
and there is a Lie algebra homomorphism $\hat{\mu}: L\ltimes P \to \Der(A\ltimes M)$, which
satisfies the following conditions:

(1) $\hat{\mu}((x,0)(a,0))=\mu(x)(a)$ for any $x\in L, a\in A$,

(2) $\hat{\mu}((x,0)(0,m)), \hat{\mu}((0, \al))((a, 0))\in M$ for any $x\in L, a\in A, m\in M, \al\in P$,

(3) $\hat{\mu}((0,\al)(0,m))=0$ for any $\al\in P, m\in M$.

A triple $(M, P, \sigma)$ corresponds to a pair $(M, P)$ together with a Lie algebra homomorphism
$\hat{\mu}: L\ltimes P \to \Der(A\ltimes M)$ given by
\[\hat{\mu}((x,\al)(a,m))=\mu(x)(a)+\{x,m\}_\ast+\sigma(a\ot \al)\]
 for all $x\in L,\al\in P, a\in A, m\in M$.
\end{rem}

A \emph{homomorphism} $(g, f)\colon  (M, P, \sigma) \to (M', P', \sigma')$ of $(A, L)$-modules
means that $g\colon  M\to M'$ is a homomorphism of both $A$-$A$-bimodules and Lie modules,
$f\colon  P\to P'$ is a homomorphism of Lie modules,
and the following diagram
\begin{align}\label{comm diag of sigma}
\begin{CD}
A \ot P @>\sigma>> M\\
@V{\id_A \ot f}VV @VV{g}V\\
A \ot P' @>\sigma'>> M'
\end{CD}
\end{align}
 commutes. We denote the category of $(A, L)$-modules by $\M(A, L)$.

\begin{rem}\label{qpm and pm}
Let $(A,\cdot,\{-,-\})$ be a noncommutative Poisson algebra. Recall from \cite{YYY} a \emph{quasi-Poisson module}
$M$ over $A$ is both an $A$-$A$-bimodule and a Lie module over $A$ with the action given by
$\{-,-\}_\ast\colon  A\times M\to M$, which satisfies
\begin{align*}
\{a, bm\}_\ast & =b\{a,m\}_\ast+\{a,b\}m,\\
\{a, mb\}_\ast & =\{a,m\}_\ast b+m\{a,b\}
\end{align*}
for all $a, b\in A$ and $m\in M$. In addition, if
\[\{ab, m\}_\ast=a\{b,m\}_\ast+\{a,m\}_\ast b\]
holds for all $a, b\in A$ and $m\in M$, then we say that $M$ is a \emph{Poisson module} over $A$.

Let $(A, A)$ be the corresponding Leibniz pair.
Assume that $M$ is both an $A$-$A$-bimodule and a Lie module over $A$ with the action given by
$\{-,-\}_\ast\colon  A\times M\to M$. Then

(i) $M$ is a quasi-Poisson module over $A$ if and only if $(M, M, \sigma)$ is a module
over the Leibniz pair $(A,A)$, where $\sigma$ is given by taken the commutator in the sense of associative action on $M$, i.e. $\sigma(a\ot m)=am-ma$ for all $a\in A, m\in M$.

(ii) $M$ is a Poisson module over $A$ if and only if $(M,M,\sigma)$ is a module
over the Leibniz pair $(A,A)$, where $\sigma$ is given by the Lie action of $A$ on $M$, i.e. $\sigma(a\ot m)=\{a,m\}_\ast$ for all $a\in A, m\in M$.

Therefore, the quasi-Poisson module category and Poisson module category over $A$ can be viewed as
subcategories (but not full subcategories) of the module category over the corresponding Leibniz pair $(A,A)$.
\end{rem}

Denote the enveloping algebra of $A$ by $A^e=A\ot A^\op$ and the universal enveloping algebra of $L$ by $\U(L)$.
In this paper, elements in $\U(L)$ is written as $X, Y, Z, \cdots$ and the identity element in $\U(L)$ is
written as $\1$.
Note that $\U(L)$ is a cocommutative Hopf algebra,
with the comultiplication given by $\Delta(X)=\sum X_{(1)}\ot X_{(2)}$ for any $X\in \U(L)$,
where $\sum$ is the Sweedler's notation,
see \cite[Section 4.0]{Sw} for more details.

Suppose that $P$ is a Lie module over $L$. Equivalently, $P$ is a $\U(L)$-module.
We denote the action $\U(L)\times P\to P$ as
$(X, \al)\mapsto X(\al)$
for any $X\in \U(L)$ and $\al\in P$.
Note that $\U(L)$ is a cocommutative Hopf algebra and $A^e$ is also a $\U(L)$-module with the action given by
\[X(a\ot b')=\sum X_{(1)}(a) \ot X_{(2)}(b)\]
for $X\in \U(L), a\ot b'\in A^e$.
Moreover, $A^e$ is a $\U(L)$-module algebra, which means that the multiplication $A^e\ot A^e\to A^e$
is a $\U(L)$-homomorphism.
 The smash product
 $A^e\sharp  \U(L)$ is an associative algebra, see \cite[Section 7.2]{Sw}.
 Recall that $A^e\sharp  \U(L)=A^e \ot \U(L)$ as a $k$-vector space. The multiplication is given by
\[(a\ot b'\sharp  X)(c\ot d'\sharp  Y)=\sum aX_{(1)}(c) \ot (X_{(2)}(d)b)'\sharp  X_{(3)}Y.\]
The following lemma is straightforward and we omit the proof here.

\begin{lem}\label{module over smash product}
Let $M$ be simultaneously an $A$-$A$-bimodule and a Lie module over $L$ with the action
$\{-,-\}_\ast\colon  L \times M\to M$.
Then $M$ is a left $A^e\sharp  \U(L)$-module if and only if
\eqref{2M-mod-left} and \eqref{2M-mod-right} holds.
\end{lem}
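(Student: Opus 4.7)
The plan is to identify a left $A^e\sharp \U(L)$-module structure with a compatible pair consisting of an $A^e$-module (equivalently, $A$-$A$-bimodule) structure and a $\U(L)$-module (equivalently, Lie $L$-module) structure, where the compatibility is the standard smash-product relation
$X\cdot(r\cdot m)=\sum X_{(1)}(r)\cdot (X_{(2)}\cdot m)$
for $X\in \U(L)$, $r\in A^e$ and $m\in M$. This relation is read off directly from the given multiplication formula of $A^e\sharp \U(L)$ applied to elements of the form $1\sharp X$ and $r\sharp \1$.

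For the ``only if'' direction, I would take a left $A^e\sharp \U(L)$-module $M$, use the canonical algebra embeddings $A^e\hookrightarrow A^e\sharp\U(L)$ and $\U(L)\hookrightarrow A^e\sharp\U(L)$ to extract an $A$-$A$-bimodule structure and a Lie $L$-module structure, and then specialise the compatibility identity to a primitive element $x\in L$, for which $\Delta(x)=x\otimes \1+\1\otimes x$. Substituting $r=a\otimes 1'$, so that $(a\otimes 1')\cdot m=am$, immediately yields \eqref{2M-mod-left}; substituting $r=1\otimes b'$ and using that $x(1_A)=0$ for any derivation $x$ yields \eqref{2M-mod-right}.

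Conversely, assuming \eqref{2M-mod-left} and \eqref{2M-mod-right}, I would define the prospective action by $(a\otimes b'\sharp X)\cdot m:=a\cdot X(m)\cdot b$ and verify associativity against the given smash-product multiplication. Unwinding the formula, this reduces to the compatibility identity $X(c\cdot m\cdot d)=\sum X_{(1)}(c)\cdot X_{(3)}(m)\cdot X_{(2)}(d)$ for every $X\in \U(L)$. For primitive $x\in L$ this is exactly the conjunction of \eqref{2M-mod-left} and \eqref{2M-mod-right} applied successively. For a general $X$ I would argue by induction on the length of a monomial in $L$-generators: if $x$ and $Y$ both satisfy the compatibility, then so does $xY$, which follows from $\Delta(xY)=\Delta(x)\Delta(Y)$ together with the primitivity of $x$.

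The main obstacle is not conceptual but bookkeeping: one has to track the threefold Sweedler coproduct $\Delta^{(2)}(X)=\sum X_{(1)}\otimes X_{(2)}\otimes X_{(3)}$ appearing in the smash-product multiplication and make sure that in each computation the $A$-factors end up on the correct side of the bimodule element under the convention $r=a\otimes b'$. Beyond this, the argument is simply the familiar identification of modules over a Hopf smash product with compatible pairs of module structures, so once the primitive case is treated the inductive step is forced by the Hopf algebra axioms of $\U(L)$.
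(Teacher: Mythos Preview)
Your argument is correct and is exactly the standard verification the paper has in mind; indeed, the paper itself omits the proof entirely, stating only that the lemma ``is straightforward and we omit the proof here.'' Your sketch fills in precisely the expected details: restrict to the embedded copies of $A^e$ and $\U(L)$ to read off the two module structures, check the smash-product compatibility on primitives $x\in L$ (where it becomes \eqref{2M-mod-left} and \eqref{2M-mod-right}), and then extend to all of $\U(L)$ by induction on monomial length using the Hopf algebra structure and cocommutativity.
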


\section{ Enveloping Algebras of Leibniz Pairs}
Let $(A, L)$ be a Leibniz pair. We write $A^i=A^{\ot i}$ and denote by $\Omega^1(A)$ the space of 1-forms of $A$,
which is by definition the first syzygy of $A$ as an $A^e$-module, see \cite[Section 7.1]{La}. 
 To be precise, as an $A^e$-module, $\Omega^1(A)=A^3\slash I$,
where $I$ is a submodule of $A^3$ generated by
\[\{a\ot b \ot 1_A -1_A\ot ab\ot 1_A+1_A\ot a\ot b\mid a,b\in A\}.\]
We simply write the element $a_1\ot a_2\ot a_3+I$ in $\Omega^1(A)$ as $a_1\ot a_2\ot a_3$ when no confusion can rise.

\begin{lem}\label{smash-prod-module-Omega}
Let $(A, L)$ be a Leibniz pair.
The 
space
$\Omega^1(A)$ of $1$-forms is a left $A^e\sharp \U(L)$-module with the action given by
\[(a\ot b'\sharp  X)(a_1\ot a_2\ot a_3)=
\sum aX_{(1)}(a_1) \ot X_{(2)}(a_2) \ot X_{(3)}(a_3)b\]
for all $a_1\ot a_2\ot a_3\in \Omega^1(A)$ and $a\ot b'\sharp  X\in A^e\sharp \U(L)$.
\end{lem}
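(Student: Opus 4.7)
My approach has two main steps: first, equip the full tensor space $A^3 = A\ot A\ot A$ with an $A^e\sharp \U(L)$-module structure matching the stated formula; and second, show that the relation submodule $I$ is stable, so that the structure passes to the quotient $\Omega^1(A)=A^3/I$.

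For step one, I would give $A^3$ the outer $A^e$-bimodule action $(a\ot b')(a_1\ot a_2\ot a_3) = aa_1\ot a_2\ot a_3 b$ together with the diagonal $\U(L)$-action
\[X(a_1\ot a_2\ot a_3) = \sum X_{(1)}(a_1)\ot X_{(2)}(a_2)\ot X_{(3)}(a_3)\]
coming from the iterated comultiplication of the cocommutative Hopf algebra $\U(L)$; this is well-defined because $A$ is a $\U(L)$-module algebra. To invoke Lemma \ref{module over smash product}, it suffices to check the compatibility relations \eqref{2M-mod-left} and \eqref{2M-mod-right} for $x\in L$. Since $x$ is primitive, the diagonal action on $A^3$ is simply the usual derivation rule, and both identities reduce immediately to the Leibniz rule $\{x,ab\}=\{x,a\}b+a\{x,b\}$ applied in the outer tensor factor. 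Thus $A^3$ is a left $A^e\sharp \U(L)$-module, and unwinding $(a\ot b'\sharp X)\omega = (a\ot b')\cdot X(\omega)$ reproduces the formula in the statement.

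For step two, $I$ is $A^e$-stable by the very definition of $\Omega^1(A)$, so it suffices to show that $\U(L)$ preserves $I$. Given $X\in\U(L)$ and a generator $\xi=a\ot b\ot 1_A-1_A\ot ab\ot 1_A+1_A\ot a\ot b$, I would expand $X(\xi)$ via the diagonal action and apply the module-algebra identities $X(cd)=\sum X_{(1)}(c)X_{(2)}(d)$ and $X(1_A)=\varepsilon(X)\,1_A$ (derivations kill the unit). Collapsing the resulting expression through the counit axiom on the Sweedler factors that hit $1_A$, each surviving summand takes the shape
\[X_{(1)}(a)\ot X_{(2)}(b)\ot 1_A-1_A\ot X_{(1)}(a)X_{(2)}(b)\ot 1_A+1_A\ot X_{(1)}(a)\ot X_{(2)}(b),\]
which is again a defining generator of $I$, so $X(\xi)\in I$. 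I expect the main obstacle to be precisely this bookkeeping: the three summands of $X(\xi)$ carry different iterated coproducts of $X$, and one must apply the counit axiom in the correct Sweedler slot in each case in order to rewrite all three contributions over the single common pair $(X_{(1)},X_{(2)})$. Once that alignment is done, each surviving piece is literally a relation defining $I$, and together with step one this completes the proof.
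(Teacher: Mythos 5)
Your proposal is correct and is essentially the paper's own argument: both use the diagonal $\U(L)$-action on the tensor factors, reduce the compatibility conditions \eqref{2M-mod-left} and \eqref{2M-mod-right} to the Leibniz rule via Lemma \ref{module over smash product}, and verify that the relations defining $\Omega^1(A)$ are preserved (your ``$I$ is $\U(L)$-stable'' is the paper's well-definedness check \eqref{well-def}, stated for primitive $x$ and then extended to all of $\U(L)$). The only difference is organizational --- you build the structure on $A^3$ first and then pass to the quotient, while the paper defines the action directly on $\Omega^1(A)$ and checks it is well defined --- which changes nothing of substance.
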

\begin{proof}
We consider the action of $L$ on $\Omega^1(A)$,
$\{-,-\}_\ast\colon L\times \Omega^1(A) \to \Omega^1(A)$ defined as
$$\{x,a_1\ot a_2\ot a_3\}_\ast=\{x,a_1\}\ot a_2\ot a_3+a_1\ot\{x,a_2\}\ot a_3+a_1\ot a_2\ot\{x,a_3\}$$
for all $x\in L$ and $a_1\ot a_2\ot a_3\in \Omega^1(A)$.
By some direct calculation, we have
\begin{align}
\{x, 1_A\ot ab\ot 1_A\}_\ast & =\{x,a\ot b\ot 1_A\}_\ast+\{x,1_A\ot a \ot b\}_\ast; \label{well-def}\\
\{[x,y], a_1\ot a_2\ot a_3\}_\ast &= \{x,\{y,a_1\ot a_2\ot a_3\}_\ast\}_\ast;
-\{y,\{x,a_1\ot a_2\ot a_3\}_\ast\}_\ast;\label{Lie-mod}\\
\{x,a(a_1\ot a_2\ot a_3)\}_\ast &= a\{x,a_1\ot a_2\ot a_3\}_\ast+\{x,a\}(a_1\ot a_2\ot a_3);\label{compatible-1}\\
\{x,(a_1\ot a_2\ot a_3)a\}_\ast & =\{x,a_1\ot a_2\ot a_3\}_\ast a+(a_1\ot a_2\ot a_3)\{x,a\}.\label{compatible-2}
\end{align}
The equality \eqref{well-def} is just to say that the action is well-defined,
and we know that the action gives a Lie module structure on $\Omega^1(A)$ by \eqref{Lie-mod}.
It follows from Lemma \ref{module over smash product} that
$\Omega^1(A)$ is an $A^e\sharp \U(L)$-module by \eqref{compatible-1} and \eqref{compatible-2}.
\end{proof}

We denote $\overline{\Omega}=\Omega^1(A)\ot \U(L)$, which is an $(A^e\sharp \U(L))$-$\U(L)$-bimodule.

\begin{lem}\label{sigma to sig} Keep the above notation,
and let $\sigma\colon  A\ot P \to M$ be a $k$-linear map. Then the map
\[\sig\colon  \overline\Omega\underset{\U(L)}{\ot} P \to M,\qquad \sig((a_1\ot a_2\ot a_3\ot X) \ot \al)=a_1\sigma(a_2\ot X(\al))a_3\]
is an $A^e\sharp \U(L)$-homomorphism if and only if
$\sigma$ satisfies \eqref{sigma1} and
\eqref{sigma2}.
\end{lem}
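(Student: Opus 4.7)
The plan is to reduce the statement to four separate checks, each of which is either automatic from the formula or translates into exactly one of the conditions \eqref{sigma1}, \eqref{sigma2}; this yields both directions of the ``iff'' simultaneously. The four requirements for $\sig\colon\overline\Omega\otL P\to M$ to be a left $A^e\sharp\U(L)$-module homomorphism are: (i) well-definedness modulo the submodule $I\subseteq A^{\ot 3}$ defining $\Omega^1(A)$; (ii) $\U(L)$-balancedness of $\otL$; (iii) left $A^e$-linearity; and (iv) left $\U(L)$-linearity.

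Items (ii) and (iii) are automatic from the formula. The $\U(L)$-balancedness holds because $X$ appears in the output only through $X(\al)$ and the $\U(L)$-action on $P$ is associative, so $(\omega\ot XY)\otL\al$ and $(\omega\ot X)\otL Y(\al)$ have common image $a_1\sigma(a_2\ot (XY)(\al))a_3$ (where $\omega=a_1\ot a_2\ot a_3$). The $A^e$-action on $\overline\Omega$ left- and right-multiplies the outer factors $a_1,a_3$, manifestly matching the $A$-bimodule structure on $M$ built into $\sig$. For (i), applying $\sig$ to the generator $a\ot b\ot 1_A-1_A\ot ab\ot 1_A+1_A\ot a\ot b$ of $I$, tensored with $\1\otL\al$, produces $a\sigma(b\ot\al)-\sigma(ab\ot\al)+\sigma(a\ot\al)b$; this vanishes for all $a,b\in A$ and $\al\in P$ if and only if \eqref{sigma1} holds.

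For (iv), since $L$ generates $\U(L)$ as an algebra it suffices to test primitive $x\in L$, where the diagonal $\U(L)$-action on $\overline\Omega$ specializes to $x\cdot(a_1\ot a_2\ot a_3\ot X)=\{x,a_1\ot a_2\ot a_3\}_\ast\ot X+(a_1\ot a_2\ot a_3)\ot xX$. Applying $\sig$ to the source side and using Lemma \ref{smash-prod-module-Omega} yields $\{x,a_1\}\sigma(a_2\ot X(\al))a_3+a_1\sigma(\{x,a_2\}\ot X(\al))a_3+a_1\sigma(a_2\ot X(\al))\{x,a_3\}+a_1\sigma(a_2\ot [x,X(\al)]_\ast)a_3$, while on the target side $\{x,a_1\sigma(a_2\ot X(\al))a_3\}_\ast$ expands via \eqref{2M-mod-left} and \eqref{2M-mod-right} into $\{x,a_1\}\sigma(a_2\ot X(\al))a_3+a_1\{x,\sigma(a_2\ot X(\al))\}_\ast a_3+a_1\sigma(a_2\ot X(\al))\{x,a_3\}$. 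Cancelling the matched outer terms reduces equality of the two sides to the single identity $\{x,\sigma(a_2\ot\beta)\}_\ast=\sigma(\{x,a_2\}\ot\beta)+\sigma(a_2\ot[x,\beta]_\ast)$ holding for all $a_2\in A$, $\beta\in P$, $x\in L$; specialising $a_1=a_3=1_A$, $X=\1$ produces \eqref{sigma2} as a necessary condition, and conversely \eqref{sigma2} yields the full identity.

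The main obstacle is the bookkeeping in (iv): one must identify the correct diagonal $\U(L)$-action on $\overline\Omega$ and carefully pair the source summands against the two Leibniz expansions of $\{x,-\}_\ast$ on $M$. Once the reduction to primitive elements is used, the computation is entirely linear and each of \eqref{sigma1}, \eqref{sigma2} falls out on precisely one side of the equivalence, delivering both directions at once.
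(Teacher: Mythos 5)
Your proof is correct and follows essentially the same route as the paper: both verify directly that the formula defines an $A^e\sharp\U(L)$-homomorphism, with \eqref{sigma1} emerging from well-definedness on the generators of $I$ and \eqref{sigma2} from $L$-equivariance. The only cosmetic difference is that you reduce the $\U(L)$-linearity check to primitive elements $x\in L$, whereas the paper runs the same computation for a general element $a\ot b'\sharp X$ in Sweedler notation and invokes cocommutativity; the two are equivalent since $L$ generates $\U(L)$.
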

\begin{proof}
Assume that $\sigma$ satisfies \eqref{sigma1} and
\eqref{sigma2}. By definition, we know that
\begin{align*}
\sig((a_1 \ot a_2 \ot a_3 \ot X) \ot \al)
=\sig((a_1 \ot a_2 \ot a_3 \ot \1) \ot X(\al)),
\end{align*}
and by (\ref{sigma1}),
\begin{align*}
&\sig((1_A \ot ab\ot 1_A\ot X) \ot \al)\\
=& \sigma(ab\ot X(\al))\\
=& a\sigma(b\ot X(\al))+\sigma(a\ot X(\al))b\\
=& \sig((a\ot b\ot 1_A\ot X)\ot \al)+\sig((1_A \ot a\ot b \ot X)\ot \al).
\end{align*}
It follows that $\sig$ is well-defined.

By direct calculation we have
\begin{align}
&\sig((a\ot b'\sharp  X)(a_1\ot a_2\ot a_3 \ot Y\ot \al) \notag \\
=&\sum \sig(aX_{(1)}(a_1)\ot X_{(2)}(a_2) \ot X_{(3)}(a_3)b \ot X_{(4)}Y \ot \al)\notag \\
=&\sum aX_{(1)}(a_1) \sigma(X_{(2)}(a_2) \ot X_{(4)}Y(\al)) X_{(3)}(a_3)b \label{homo of sig-1}.
\end{align}
On the other hand,
\begin{align*}
&(a\ot b'\sharp  X)\sig(a_1\ot a_2\ot a_3 \ot Y\ot \al)\\
=&(a\ot b'\sharp X)(a_1\sigma(a_2\ot Y(\al))a_3)\notag\\
=& ((a\ot b'\sharp  X)(a_1\ot a'_3\sharp  \1))\sigma(a_2\ot Y(\al))\\
=& \sum (aX_{(1)}(a_1) \ot X_{(2)}(a_3)b\sharp  X_{(3)}) \sigma(a_2\ot Y(\al))\\
=& \sum aX_{(1)}(a_1) X_{(3)}(\sigma(a_2\ot Y(\al))) X_{(2)}(a_3)b\\
=& \sum aX_{(1)}(a_1) \sigma(X_{(3)_{(1)}}(a_2)\ot X_{(3)_{(2)}}(Y(\al)))X_{(2)}(a_3)b\\
=& (3.5),
\end{align*}
where the last equality is deduced from the cocommutativity of $\U(L)$.
Consequently, $\sig$ is a homomorphism of $A^e\sharp \U(L)$-modules.

Conversely, if $\sig$ is an $A^e\sharp  \U(L)$-homomorphism, it is easily checked that
$\sigma$ satisfies \eqref{sigma1} and \eqref{sigma2}.
\end{proof}

\begin{Def}\label{U(A,L)} Let $(A, L)$ be a Leibniz pair.
The triangular matrix algebra
$$\begin{pmatrix}
\U(L)                   & 0\\
\overline\Omega    & A^e\sharp \U(L)
\end{pmatrix}$$
is called the \emph{enveloping algebra} of $(A, L)$,
denoted by $\U(A, L)$.
\end{Def}

\begin{rem} A module $(P, M, \sig)$ over $\U(A, L)$ means that $P$ is a $\U(L)$-module,
$M$ is an $A^e\sharp \U(L)$-module, and $\sig: \overline\Omega\underset{\U(L)}{\ot} P\to M$ is
a homomorphism of $A^e\sharp \U(L)$-modules. A homomorphism $(f, g)\colon (P, M, \sig) \to (P', M', \sig')$
of $\U(A, L)$-modules means that $f: P\to P'$ is a $\U(L)$-homomorphism, $g: M\to M'$ is an $A^e\sharp \U(L)$-
homomorphism, and the following diagram commutes.
\begin{align}\label{comm diag of sig}
\begin{CD}
\overline\Omega \underset{\U(L)}{\ot} P @>{\sig}>> &M\\
@V{\id_{\overline{\Omega}}\ot f}VV & @VV{g}V\\
\overline\Omega \underset{\U(L)}{\ot} P' @>>{\sig'}> &M'
\end{CD}
\end{align}
Denote by $\U(A, L)\Mod$ the category of $\U(A,L)$-modules.
\end{rem}

\begin{thm}\label{thm of cat equiv}
Let $(A, L)$ be a Leibniz pair. Then the category of modules over $(A, L)$
is isomorphic to the category of $\U(A, L)$-modules.
\end{thm}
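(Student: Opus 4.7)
The plan is to define mutually inverse functors $F\colon \M(A,L) \to \U(A,L)\Mod$ and $G\colon \U(A,L)\Mod\to\M(A,L)$; almost all the hard work has been packaged into Lemmas \ref{module over smash product} and \ref{sigma to sig}, so the proof is largely a matter of assembling these pieces and verifying that morphisms translate correctly.

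On an object $(M,P,\sigma)$, I would set $F(M,P,\sigma):=(P,M,\sig)$, where $P$ acquires its $\U(L)$-module structure by the universal property of $\U(L)$, $M$ acquires its $A^e\sharp\U(L)$-module structure from Lemma \ref{module over smash product} (whose hypotheses are precisely (\ref{2M-mod-left}) and (\ref{2M-mod-right})), and $\sig$ is the map associated to $\sigma$ via Lemma \ref{sigma to sig} (whose hypotheses are precisely (\ref{sigma1}) and (\ref{sigma2})). On a morphism $(g,f)$, set $F(g,f):=(f,g)$. Conversely, given $(P,M,\sig)$, I would define $G(P,M,\sig):=(M,P,\sigma)$, where the Lie module structures on $P$ and $M$ are the restrictions along $L\hookrightarrow\U(L)$, the $A$-$A$-bimodule structure on $M$ is the restriction along $A^e\hookrightarrow A^e\sharp\U(L)$, and $\sigma\colon A\ot P\to M$ is given by $\sigma(a\ot\al):=\sig((1_A\ot a\ot 1_A\ot\1)\ot\al)$; that $G(P,M,\sig)\in\M(A,L)$ is exactly the ``only if'' directions of Lemmas \ref{module over smash product} and \ref{sigma to sig}, and $G$ on morphisms again just swaps the pair.

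To confirm that $F$ and $G$ are mutually inverse on objects, I would check component by component: Lie $L$-modules coincide with $\U(L)$-modules by universality, and Lemma \ref{module over smash product} already identifies $A^e\sharp\U(L)$-modules with bimodules equipped with a compatible $L$-action. For the $\sigma$--$\sig$ round trip, evaluating the formula of Lemma \ref{sigma to sig} at $a_1=a_3=1_A,\ X=\1$ returns $\sigma$; conversely, the $A^e\sharp\U(L)$-linearity of $\sig$ forces
\[\sig\bigl((a_1\ot a_2\ot a_3\ot X)\ot\al\bigr) = a_1\,\sig\bigl((1_A\ot a_2\ot 1_A\ot\1)\ot X(\al)\bigr)\,a_3,\]
which is precisely the formula recovered by going through $G$ and then $F$. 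For the morphism bijection, that $f$ is Lie-$L$-linear iff $\U(L)$-linear and that $g$ is $A^e\sharp\U(L)$-linear iff simultaneously bimodule- and $L$-linear are routine; the equivalence of the squares (\ref{comm diag of sigma}) and (\ref{comm diag of sig}) follows in one direction from the displayed identity together with $f$ being $\U(L)$-linear and $g$ being a bimodule map, and in the reverse direction by specializing to $a_1=a_3=1_A,\ X=\1$. The main obstacle I anticipate is purely bookkeeping: confirming well-definedness of $\sig$ on the quotient $\Omega^1(A)$ and the balancing over $\U(L)$ in $\overline\Omega\underset{\U(L)}{\ot}P$ consistently on both sides. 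Fortunately, these checks are already contained in Lemma \ref{sigma to sig}, so the theorem should fall out with no genuinely new computation.
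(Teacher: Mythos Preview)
Your proposal is correct and follows essentially the same approach as the paper: both construct mutually inverse functors $F$ and $G$ by invoking Lemma~\ref{module over smash product} for the $A^e\sharp\U(L)$-module structure and Lemma~\ref{sigma to sig} for the $\sigma\leftrightarrow\sig$ correspondence, with morphisms handled by the equivalence of diagrams~(\ref{comm diag of sigma}) and~(\ref{comm diag of sig}). You are in fact slightly more explicit than the paper in spelling out the recovery $\sigma(a\ot\al)=\sig((1_A\ot a\ot 1_A\ot\1)\ot\al)$ and the round-trip verification, but this is only added detail, not a different strategy.
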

\begin{proof} First we define a functor $F\colon \M(A, L) \to \U(A, L)\Mod$ as follows.
Suppose that $(M, P, \sigma)$ is a module over the Leibniz pair $(A, L)$.
We define $F((M, P, \sigma))=(P, M, \sig)$ with the action of $\U(A, L)$ given by
setting
$$\begin{pmatrix}
X                        &     0\\
a_1\ot a_2\ot a_3 \ot Z  &   a\ot b'\sharp Y
\end{pmatrix}
\begin{pmatrix}
\al\\
m
\end{pmatrix}=
\begin{pmatrix}
X(\al)\\
\sig(a_1\ot a_2\ot a_3\ot Z \ot \al)+a(Y(m))b
\end{pmatrix}$$
where $\sig$ is given by Lemma \ref{sigma to sig}, i.e.
\[\sig(a_1\ot a_2\ot a_3\ot Z \ot \al)=a_1\sigma(a_2 \ot Z(\al))a_3\]
for all $a_1\ot a_2\ot a_3\ot Z \ot \al\in \overline\Omega \underset{\U(L)}{\ot} P$.
By Lemma \ref{sigma to sig}, we have $\sig\colon \overline\Omega\underset{\U(L)}{\ot} P\to M $
is a homomorphism of $A^e\sharp \U(L)$-modules and hence the triple $(P, M, \sig)$ is a module over $\U(A, L)$.

For a homomorphism $(g, f)\colon  (M, P, \sigma) \to (M', P', \sigma')$ of $(A, L)$-modules,
we define $F((g, f))=(f, g)$. From the commutativity of the diagram \eqref{comm diag of sigma},
it follows that
the diagram \eqref{comm diag of sig}
commutes. Therefore, $(f, g)\colon (P, M, \sig) \to (P', M', \sig')$ is a $\U(A, L)$-homomorphism.

On the other hand, we define a functor $G\colon \U(A, L)\Mod \to \M(A, L)$ as follows.
For each left $\U(A, L)$-module $(P, M, \sig)$, $G((P, M, \sig))=(M, P, \sigma)$, where
$P$ is a $\U(L)$-module and hence a Lie module over $L$,
and $M$ is an $A^e\sharp  \U(L)$-module. By Lemma \ref{module over smash product},
$M$ is simultaneously an $A$-$A$-bimodule and a Lie module over $L$
satisfying \eqref{2M-mod-left} and \eqref{2M-mod-right}.
 It follows from Lemma \ref{sigma to sig} that the corresponding triple
$(M, P, \sigma)$ is a module over the Leibniz pair $(A, L)$.

For any $\U(A, L)$-homomorphism $(f, g)\colon  (P, M, \sig)\to (P', M', \sig')$, it is easy to check that $G((f,g))=(g,f)$ is a homomorphism of $(A, L)$-modules
from $(M, P, \sigma)$ to $(M', P', \sigma')$
 because
the diagram \eqref{comm diag of sigma} is commutative if and only if the diagram \eqref{comm diag of sig}
commutes.

The functors $F$ and $G$ are mutually inverse.
\end{proof}

\section{Cohomology for Leibniz Pairs}

Let $(A, L)$ be a Leibniz pair. We write $\wedge^jL=\wedge^j$ for short.

Theorem \ref{thm of cat equiv} implies that the module category over a
Leibniz pair $(A, L)$ has enough projective and injective objects,
which enables us to construct the cohomology theory for Leibniz pairs
by using projective or injective resolution in a standard way.

We begin with a well-known
result concerning projective modules over a general matrix triangular algebra.

\begin{lem}[{\cite[Proposition 2.5]{ARS}}] \label{proj module for TMA}
Let $T=\begin{pmatrix}
A & 0\\
_BM_A & B
\end{pmatrix}$ be a triangular matrix algebra. Then $(P, Q, \sigma)$ is a projective $T$-module
if and only if $P$ is a projective $A$-module, $\sigma\colon  M\underset{A}\ot P\to Q$ is a split monomorphism of
$B$-modules with $\mathrm{Coker}(\sigma)$ being a projective $B$-module.
\end{lem}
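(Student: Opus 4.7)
The plan is to work through the standard dictionary between left $T$-modules and triples. I would first verify that a left $T$-module is the same data as a triple $(P,Q,\sigma)$ with $P$ a left $A$-module, $Q$ a left $B$-module, and $\sigma\colon M\underset{A}{\ot} P\to Q$ a $B$-linear map, by splitting according to the idempotents $e_1=\mathrm{diag}(1_A,0)$ and $e_2=\mathrm{diag}(0,1_B)$ of $T$. Under this dictionary the two indecomposable projective summands of $T$ become $Te_1\cong(A,M,\mathrm{id}_M)$ and $Te_2\cong(0,B,0)$, so any projective $T$-module is a direct summand of some $(Te_1)^{(I)}\oplus(Te_2)^{(J)}\cong(A^{(I)},\,M^{(I)}\oplus B^{(J)},\,\iota)$, where $\iota$ embeds $M^{(I)}\cong M\underset{A}{\ot}A^{(I)}$ as the first factor of the second coordinate.

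For the sufficiency direction, given $P$ projective over $A$ and a split monomorphism $\sigma\colon M\underset{A}{\ot}P\to Q$ of $B$-modules with projective cokernel $Q'$, a chosen splitting yields an isomorphism of triples $(P,Q,\sigma)\cong(P,\,M\underset{A}{\ot}P\oplus Q',\,\iota)$. Writing $P$ as a direct summand of some $A^{(I)}$ and $Q'$ as a direct summand of some $B^{(J)}$ then presents this triple as a direct summand of the free $T$-module $(A^{(I)},\,M^{(I)}\oplus B^{(J)},\,\iota)$, so $(P,Q,\sigma)$ is projective. For the necessity direction, I would realise $(P,Q,\sigma)$ as a direct summand of such a free $T$-module and read the retraction coordinate-wise: the first coordinate identifies $P$ as a summand of $A^{(I)}$, and a short diagram chase on the second coordinate shows that $\sigma$ is a split monomorphism whose cokernel is a summand of $B^{(J)}$, hence projective over $B$.

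The only delicate step is checking compatibility of retractions with the functor $M\underset{A}{\ot}-$: a section of $A^{(I)}\to P$ given by projectivity of $P$ yields, upon tensoring with $M$, a section of $M^{(I)}\to M\underset{A}{\ot}P$, so the split structure on the second coordinate descends as expected. Once this is in place the remainder of the argument is bookkeeping with the triple decomposition, and I expect no further obstacles.
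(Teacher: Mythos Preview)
Your outline is correct and is essentially the standard argument found in \cite{ARS}. Note, however, that the paper does not give its own proof of this lemma at all: it simply quotes the result as \cite[Proposition~2.5]{ARS} and moves on. So there is no ``paper's proof'' to compare against; your sketch is an accurate reconstruction of the referenced proof, and nothing further is needed.
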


We come back to the Leibniz pair $(A, L)$. Consider the deleted Koszul resolution
\[
\cdots \to \U(L)\ot \wedge^j \xr{d_j}\U(L)\ot \wedge^{j-1}\to \cdots \to \U(L)\ot \wedge^1\xr{d_1} \U(L) \to 0,
\leqno \mathbb{K}_\bullet
\]
where
\begin{align*}
 & d_j(X \ot x_1\wedge x_2\wedge \cdots \wedge x_{j})\\
= &\sum_{k=1}^n(-1)^{k-1} X(x_k)\ot x_1\wedge \cdots \widehat{x_k} \cdots \wedge x_{j}\\
&+\sum_{1\le p< q\le j}(-1)^{p+q}X \ot [x_p,x_q]\wedge x_1\wedge \cdots \widehat{x_p} \cdots \widehat{x_q}
\cdots \wedge x_j
\end{align*}
for all $X \ot x_1\wedge x_2\wedge \cdots \wedge x_{j}\in \U(L)\ot \wedge^j$, $j\ge 1$, \cite[Chapter VII, Theorem 4.2]{HS}.
The standard resolution of $\Omega^1(A)$ as an $A^e$-module is given as follows
\[\cdots \to A^{i+3} \xr{\delta_i} A^{i+2} \to \cdots \to A^4\xr{\delta_1} A^3 \xr{\delta_0} \Omega^1(A)\to 0,
\leqno \mathbb{S}_\bullet \]
where
\[
\delta_i(a_1\ot a_2\ot \cdots \ot a_{i+3})=
\sum_{k=1}^{i+3} (-1)^{i-1}a_1\ot \cdots \ot a_ka_{k+1}\ot  \cdots \ot a_{i+3}
\]
for all $a_1\ot a_2\ot \cdots \ot a_{i+3} \in A^{i+3}$, $i\ge 1$ and $\delta_0$ is the canonical projection,
\cite{Ha}.

Taking tensor product $\mathbb{K}_\bullet \ot \mathbb{S}_\bullet$, we obtain a bicomplex
{\footnotesize
\begin{align*}
\begin{CD}
&& \cdots  && \cdots &&\cdots\\
&&          @VVV                     @VVV                          @VVV\\
0@<<<A^4\ot \U(L)        @<\delta^H_{2,0}<<     A^4\ot \U(L)\ot\wedge^1   @<\delta^H_{2,1}<< A^4\ot \U(L)\ot\wedge^2        @<<< \cdots\\
&&          @V{\delta^V_{1,0}}VV                     @V{\delta^V_{1,1}}VV                          @V{\delta^V_{1,2}}VV\\
0@<<< A^3\ot \U(L)        @<\delta^H_{1,0}<<    A^3\ot \U(L)\ot\wedge^1   @<\delta^H_{1,1}<< A^3\ot \U(L)\ot\wedge^2        @<<< \cdots\\
&&        @V{\delta^V_{0,0}}VV                     @V{\delta^V_{0,1}}VV                          @V{\delta^V_{0,2}}VV\\
0@<<<\Omega^1(A)\ot \U(L) @<\delta^H_{0,0}<< \Omega^1(A)\ot \U(L)\ot\wedge^1 @<\delta^H_{0,1}<< \Omega^1(A)\ot \U(L)\ot\wedge^2 @<<< \cdots\\
&&          @VVV                     @VVV                          @VVV\\
&& 0  && 0 &&0
\end{CD}
\end{align*}}
where $\delta_{i,j}^H=\id \ot d_j$, and
$\delta_{i,j}^V=\delta_i\ot \id$. This is a bicomplex of $A^e\sharp \U(L)$-modules.

We denote $Q_i=A^{i+3}\ot \U(L)$ for $i\ge 0$, $Q_{-1}=\overline\Omega$, and
$P_j=\U(L)\ot \wedge^j$ for $j\ge 0$.
By K\"unneth's Theorem \cite[Chapter V, Theorem 2.1]{HS},
the total complex of the bicomplex, denoted by $\mathbb{Q}_{\bullet}$,
\[\cdots\to \overset{n}{\underset{i=0}{\oplus}}Q_{i-1}\ot \wedge^{n-i}
\xr{\vp_n} \overset{n-1}{\underset{i=0}{\oplus}}Q_{i-1}\ot \wedge^{n-i-1}\to\cdots
\to Q_0\oplus Q_{-1}\ot \wedge^1 \xr{\vp_0} Q_{-1}\to  0\]
is exact,
where $\vp_n=\sum\limits_{i+j=n} \delta^H_{i,j} +(-1)^i\delta^V_{i,j}$ for $n\ge 0$.

\begin{lem}\label{proj resol of char mod}
Using the above notation, we have that
\begin{align*}
\mathbb{P}_\bullet=\cdots & \to (P_n, \overset{n}{\underset{i=0}{\oplus}}Q_{i-1}\ot \wedge^{n-i}, \iota_n)
\xr{(d_n, \vp_n)}(P_{n-1}, \overset{n-1}{\underset{i=0}{\oplus}}Q_{i-1}\ot \wedge^{n-i-1}, \iota_{n-1})\\
&\to \cdots \to (P_0,Q_{-1},\iota_0) \to 0 \notag
\end{align*}
is a projective resolution of $(k,0,0)$ as a $\U(A, L)$-module,
where
\begin{align*}
\iota_n: \overline{\Omega}\underset{\U(L)}{\ot} P_n &\to \overset{n}{\underset{i=0}{\oplus}}Q_{i-1}\ot \wedge^{n-i}\\
(a_1\ot a_2\ot a_3\ot X) \ot (Y \ot \omega)
&\mapsto (a_1\ot a_2\ot a_3\ot X Y) \ot \omega
\end{align*}
for $n\ge 0$.
\end{lem}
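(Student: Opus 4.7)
The plan is to apply the projectivity criterion of Lemma~\ref{proj module for TMA} termwise to $\mathbb{P}_\bullet$, to identify the augmented sequence as the componentwise assembly of the Koszul and total complexes, and to check compatibility of the differentials with the connecting maps $\iota_n$.

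\textbf{Projectivity of $\mathbb{P}_n$.} Using the $(A^e\sharp\U(L))$-$\U(L)$-bimodule structure on $\overline{\Omega}=\Omega^1(A)\ot\U(L)$, there is a canonical identification
\[
\overline{\Omega}\otL P_n=(\Omega^1(A)\ot\U(L))\otL(\U(L)\ot\wedge^n)\cong \Omega^1(A)\ot\U(L)\ot\wedge^n=Q_{-1}\ot\wedge^n,
\]
sending $(a_1\ot a_2\ot a_3\ot X)\ot(Y\ot\omega)$ to $a_1\ot a_2\ot a_3\ot XY\ot\omega$. Under this identification $\iota_n$ is visibly the inclusion as the $(i=0)$-summand of $\bigoplus_{i=0}^n Q_{i-1}\ot\wedge^{n-i}$, hence a split monomorphism of $A^e\sharp\U(L)$-modules with complement $\bigoplus_{i=1}^n Q_{i-1}\ot\wedge^{n-i}$. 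For each $i\geq 1$ the summand $Q_{i-1}\ot\wedge^{n-i}=A^{i+2}\ot\U(L)\ot\wedge^{n-i}$ needs to be projective over $A^e\sharp\U(L)$; this rests on the standard fact that $A^{\ot m}\ot\U(L)$, equipped with the outer $A^e$-action and the diagonal $\U(L)$-action, is free over the smash product, proved via an antipode-twist isomorphism $A^e\sharp\U(L)\ot A^{m-2}\xrightarrow{\sim}A^{\ot m}\ot\U(L)$. Since $P_n=\U(L)\ot\wedge^n$ is free over $\U(L)$, Lemma~\ref{proj module for TMA} applies and $\mathbb{P}_n$ is projective over $\U(A,L)$.

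\textbf{Compatibility of differentials.} To see that $(d_n,\vp_n)$ is a morphism of $\U(A,L)$-modules one has to verify
\[
\vp_n\circ\iota_n=\iota_{n-1}\circ(\id_{\overline{\Omega}}\ot d_n).
\]
The image of $\iota_n$ sits in $Q_{-1}\ot\wedge^n$, and on this summand the vertical component of $\vp_n$ would land in the nonexistent $Q_{-2}$, so $\vp_n|_{Q_{-1}\ot\wedge^n}=\id\ot d_n$, which is exactly the right-hand side after composing with $\iota_{n-1}$.

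\textbf{Exactness.} The augmented complex $\mathbb{P}_\bullet\to(k,0,0)\to 0$ splits componentwise because exactness of a sequence of triples is detected in each slot. The upper component is the augmented Koszul resolution $\cdots\to P_1\xrightarrow{d_1}P_0\xrightarrow{\e}k\to 0$, exact by classical Lie algebra homology; the lower component is the augmented total complex $\mathbb{Q}_\bullet\to 0$, whose exactness was recorded just above the lemma via K\"unneth's theorem. Hence $\mathbb{P}_\bullet$ resolves $(k,0,0)$.

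\textbf{Main obstacle.} The delicate step is the projectivity of $A^{\ot m}\ot\U(L)$ over $A^e\sharp\U(L)$: the free $A^e$-action and the diagonal $\U(L)$-action become intertwined, and one must untwist them by the antipode of the cocommutative Hopf algebra $\U(L)$ to exhibit a free basis explicitly. Everything else is bookkeeping: splitting a triple into its two components, using the triangular matrix algebra criterion, and assembling the already-given exact Koszul and total complexes.
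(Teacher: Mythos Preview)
Your proof is correct and follows essentially the same route as the paper's: apply Lemma~\ref{proj module for TMA} termwise after identifying $\overline{\Omega}\otL P_n\cong Q_{-1}\ot\wedge^n$ as the bottom summand, then invoke exactness of the Koszul complex and of $\mathbb{Q}_\bullet$ componentwise. The paper simply asserts that $A^{i+3}\ot\U(L)\ot\wedge^j$ is free over $A^e\sharp\U(L)$ and that the relevant square commutes ``by direct calculation,'' whereas you spell out the antipode-untwisting and the reason $\vp_n\mid_{Q_{-1}\ot\wedge^n}=\id\ot d_n$; these are welcome elaborations but not a different strategy.
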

\begin{proof}
Note that $Q_i\underset{\U(L)}{\ot} P_j$ is isomorphic to $A^{i+3} \ot \U(L) \ot \wedge^j$,
which is a free $A^e\sharp \U(L)$-module for $i, j\ge 0$. The $\U(L)$-module
$P_n$ is free and $\iota_n$ is a split monomorphism with $\mathrm{Coker}(\iota_n)$ projective,
since $\iota_n$ is the composition of the natural isomorphism
$\overline\Omega \otL P_n \cong \overline\Omega \ot \wedge^n$
and the inclusion map
$\overline\Omega \ot \wedge^n
\hookrightarrow \overset{n}{\underset{i=0}{\oplus}}Q_{i-1}\ot \wedge^{n-i}$.
It follows from Lemma \ref{proj module for TMA} that
$(P_n, \overset{n}{\underset{i=0}{\oplus}}Q_{i-1}\ot \wedge^{n-i}, \iota_n)$ is a projective $\U(A, L)$-module.

By direct calculation, we have that the following diagram
\[\begin{CD}
\overline\Omega \ot P_n @>{\id \ot d_n}>> \overline\Omega\ot P_{n-1}\\
@V{\iota_n}VV  @VV{\iota_{n-1}}V\\
\overset{n}{\underset{i=0}{\oplus}}Q_{i-1}\ot \wedge^{n-i}
@>>{\vp_n}> \overset{n-1}{\underset{i=0}{\oplus}}Q_{i-1}\ot \wedge^{n-i}
\end{CD}\]
is commutative and $(d_n,\vp_n)$ is a homomorphism of $\U(A, L)$-modules.
By the exactness of $\mathbb{K}_\bullet\to k\to 0$ and
the complex $\mathbb{Q}_{\bullet}$,
we know that $\mathbb{P}_\bullet$ is a projective resolution of
the trivial $\U(A, L)$-module $(k,0,0)$.
\end{proof}

\begin{lem} \label{isomorphism of homo}
Let $(P, M, \sig)$ be a module over $\U(A, L)$. Then
\begin{align*}
& \Hom_{\U(A, L)}((P_n, \overset{n}{\underset{i=0}{\oplus}}Q_{i-1}\ot \wedge^{n-i}, \iota_n),(P, M, \sig))\\
\cong &
\Hom_k(\wedge^nL,P)\oplus \left(\overset{n}{\underset{i=1}{\oplus}}\Hom_k(A^i\ot \wedge^{n-i},M)\right).
\end{align*}
\end{lem}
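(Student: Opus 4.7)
The plan is to unfold the definition of a $\U(A,L)$-homomorphism according to the triangular block structure of $\U(A,L)$ and then apply freeness block by block. A morphism $(f,g)\colon(P_n,\bigoplus_{i=0}^n Q_{i-1}\otimes\wedge^{n-i},\iota_n)\to(P,M,\sig)$ consists of a $\U(L)$-homomorphism $f\colon P_n\to P$, an $A^e\sharp\U(L)$-homomorphism $g\colon\bigoplus_{i=0}^n Q_{i-1}\otimes\wedge^{n-i}\to M$, together with the compatibility $g\circ\iota_n=\sig\circ(\id_{\overline\Omega}\otimes f)$. Since the direct sum is a coproduct in $A^e\sharp\U(L)\Mod$, the map $g$ splits as $(g_0,g_1,\ldots,g_n)$ with $g_i\colon Q_{i-1}\otimes\wedge^{n-i}\to M$, and I would treat each $g_i$ and $f$ separately.

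Next I would handle the summands with $i\geq 1$. Writing $Q_{i-1}\otimes\wedge^{n-i}=A^{i+2}\otimes\U(L)\otimes\wedge^{n-i}$ and splitting $A^{i+2}=A\otimes A^i\otimes A$, the subspace $1_A\otimes A^i\otimes 1_A\otimes\1\otimes\wedge^{n-i}\cong A^i\otimes\wedge^{n-i}$ generates $Q_{i-1}\otimes\wedge^{n-i}$ freely over $A^e\sharp\U(L)$; this is the same freeness already invoked in the proof of Lemma~\ref{proj resol of char mod}. Consequently each $g_i$ is in natural bijection with a $k$-linear map $A^i\otimes\wedge^{n-i}\to M$, producing the summand $\bigoplus_{i=1}^n\Hom_k(A^i\otimes\wedge^{n-i},M)$.

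For the remaining data, I would recall from the proof of Lemma~\ref{proj resol of char mod} that $\iota_n$ factors as the natural isomorphism $\overline\Omega\otL P_n\cong\overline\Omega\otimes\wedge^n$ composed with the inclusion of the $i=0$ summand. The compatibility $g\circ\iota_n=\sig\circ(\id\otimes f)$ therefore forces $g_0$ to be uniquely determined by $f$ and $\sig$, contributing no independent degrees of freedom, while $f$ itself is a $\U(L)$-homomorphism out of the free $\U(L)$-module $P_n=\U(L)\otimes\wedge^n$ and is thus freely parameterized by $\Hom_k(\wedge^n L,P)$. Assembling these three pieces gives the claimed direct-sum decomposition. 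The only delicate point is the freeness assertion for the $i\geq 1$ blocks, because the $A^e\sharp\U(L)$-action on $A^{i+3}\otimes\U(L)$ entangles the diagonal derivation action of $\U(L)$ on the outer $A$-factors with outer $A^e$-multiplication; this is resolved using the cocommutativity of $\U(L)$ together with the identity $X(1_A)=\varepsilon(X)1_A$ to produce an explicit inverse to the canonical surjection $(A^e\sharp\U(L))\otimes(A^i\otimes\wedge^{n-i})\twoheadrightarrow Q_{i-1}\otimes\wedge^{n-i}$.
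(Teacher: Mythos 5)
Your proposal is correct and follows essentially the same route as the paper: decompose $g$ into components $g_i$ over the direct sum, observe that the compatibility square forces $g_0=\sig\circ(\id_{\overline\Omega}\ot f)$ so only $(f,g_n,\dots,g_1)$ are free parameters, and then use freeness of $P_n$ over $\U(L)$ and of $Q_{i-1}\ot\wedge^{n-i}$ over $A^e\sharp\U(L)$ to reduce to $k$-linear maps. The extra detail you supply on why $A^{i+2}\ot\U(L)\ot\wedge^{n-i}$ is free is a point the paper simply asserts, so no discrepancy arises.
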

\begin{proof} By definition,
a pair $(f,g)$ is a $\U(A, L)$-homomorphism from
$(P_n, \overset{n}{\underset{i=0}{\oplus}}Q_{i-1}\ot \wedge^{n-i}, \iota_n)$
to $(P, M, \sig)$ if and only if
$f\in \Hom_{\U(L)}(P_n, P)$, $
g\in \overset{n}{\underset{i=0}{\oplus}}\Hom_{A^e\sharp \U(L)}(Q_{i-1}\ot \wedge^{n-i},M)$,
and the following diagram
\begin{align*}
\begin{CD}
\overline\Omega \otL P_n @>{\id_{\overline\Omega}\ot f}>> \overline\Omega \otL P\\
@V{\iota_n}VV @VV{\sig}V\\
\overset{n}{\underset{i=0}{\oplus}}Q_{i-1}\ot \wedge^{n-i}
@>>g> M
\end{CD}
\end{align*}
commutes. Write $g=(g_n,\cdots,g_1,g_0)$ with $g_i\in \Hom_{A^e\sharp  \U(L)}(Q_{i-1}\ot \wedge^{n-i}, M)$,
$i\ge 0$. The commutativity of the diagram reads as
$g_0=g\circ \iota_n=\widetilde \sigma\circ(\id_{\overline\Omega}\ot f)$.
Thus $(f,g)$ is uniquely determined by $(f,g_n,\cdots,g_1)$.

Moreover, we have isomorphisms of $k$-vector spaces
\[\Hom_{\U(L)}(P_n,P)\cong \Hom_k(\wedge^n, P)\]
and
\[\Hom_{A^e\sharp \U(L)}(Q_{i-1}\ot\wedge^{n-i},M)\cong \Hom_k(A^i\ot \wedge^{n-i},M)\]
for any $n\ge 0$ and $i\ge 1$. Therefore, there is an isomorphism of the $k$-vector spaces
\begin{align*}
& \Hom_{\U(A, L)}((P_n,
\underset{0\le i\le n}{\oplus} Q_{i-1}\ot \wedge^{n-i},\iota_n), (P, M, \sig))\\
\cong & \Hom_k(\wedge^n,P)\oplus \left(\overset{n}{\underset{i=1}{\oplus}}\Hom_k(A^i\ot \wedge^{n-i},M)\right).
\end{align*}
\end{proof}

Recall the cohomology group $\H^\bullet_{LP}(A, L;M, P)$ of the Leibniz pair $(A, L)$ with coefficients
in the module $(M, P, \sigma)$, which is defined as the cohomology group
of the total complex of the following bicomplex $C^{\bullet, \bullet}(A, L; M, P)$, see \cite{FGV} for detail.
{\footnotesize
\begin{align*}
\begin{CD}
&& \cdots && \cdots && \cdots\\
&& @AAA   @AAA @AAA\\
0 @>>> \Hom_k(A^2,M) @>{\delta_H}>> \Hom_k(A^2\ot \wedge^1,M) @>{\delta_H}>> \Hom_k(A^2\ot \wedge^2,M)&\cdots \\
&& @A{\delta_V}AA   @A{\delta_V}AA @AA{\delta_V}A\\
0 @>>> \Hom_k(A,M) @>{\delta_H}>> \Hom_k(A\ot \wedge^1,M) @>{\delta_H}>> \Hom_k(A\ot \wedge^2,M)&\cdots \\
&& @A{\delta_v}AA   @A{\delta_v}AA @AA{\delta_v}A\\
0 @>>> P @>{\delta_h}>> \Hom_k(\wedge^1,P) @>{\delta_h}>> \Hom_k(\wedge^2,P)&\cdots \\
&& @AAA   @AAA @AAA\\
&& 0 && 0 && 0
\end{CD}
\end{align*}}
where $\delta_v: \Hom_k(\wedge^j,P) \to \Hom_k(A\ot \wedge^j, M)$,
\[(\delta_vf)(a\ot \omega)=\sigma(a\ot f(\omega)),\]
$\delta_V: \Hom_k(A^i\ot \wedge^j,M) \to \Hom_k(A^{i+1}\ot \wedge^j, M)$,
\begin{align*}
&\delta_V(f)(a_0\ot a_1\ot \cdots \ot a_i\ot \omega)\\
=& a_0f(a_1\ot \cdots \ot a_i\ot \omega)\\
&+\sum_{l=0}^{i-1} (-1)^{l+1} f(a_0\ot \cdots\ot a_{l-1} \ot a_la_{l+1}\ot a_{l+2} \ot \cdots \ot a_i\ot \omega)\\
&+(-1)^{i+1}f(a_0\ot \cdots \ot a_{i-1}\ot \omega)a_i,
\end{align*}
$\delta_H: \Hom_k(A^i\ot \wedge^j,M) \to \Hom_k(A^{i}\ot \wedge^{j+1},M)$,
\begin{align*}
&\delta_H(f)(a_1\ot  \cdots \ot a_i\ot x_0\wedge \cdots\wedge x_j)\\
=& \sum_{l=0}^{j}(-1)^l\left(\{x_l, f(a_1\ot \cdots \ot a_i\ot x_0\wedge
\cdots \widehat{x_l} \cdots \wedge x_j)\}_\ast\right. \\
&\quad\quad\quad-\sum_{t=1}^i \left.f(a_1\ot \cdots\ot a_{t-1}\ot \{x_l, a_t\}\ot a_{t+1}\ot\cdots \ot a_i\ot x_0\wedge
\cdots \widehat{x_l} \cdots \wedge x_j)\right)\\
&+\sum_{0\le p<q\le j} (-1)^{p+q} f(a_1\ot \cdots \ot a_i\ot
[x_p,x_q]\wedge x_0\wedge \cdots \widehat{x_p} \cdots \widehat{x_q}
\cdots \wedge x_j),
\end{align*}
and $\delta_h\colon \Hom_k(\wedge^n, P)\to \Hom_k(\wedge^{n+1}, P)$ is just the Chevalley-Eilenberg coboundary, i.e.
\begin{align*}
&(\delta_h f)(x_1\wedge  \cdots \wedge x_{n+1})\\
= & \sum_{i=1}^{n+1} (-1)^{i-1} [x_i, f(x_1\wedge \cdots \widehat{x_i}\cdots \wedge x_{n+1})]_\ast\\
&+\sum_{1\le p<q\le n+1}(-1)^{p+q}f([x_p, x_q]\wedge x_1\wedge \cdots \widehat{x_p}\cdots \widehat{x_q} \cdots \wedge x_{n+1}).
\end{align*}

When $M=A$, $P=L$, the Leibniz pair cohomology $\H_{LP}^n(A, L; A, L)$ is denoted by $\H_{LP}^n(A, L)$ for short.
We introduce the following main result.

\begin{thm}\label{char module}
Let $(A, L)$ be a Leibniz pair and $\U(A, L)$ be the enveloping algebra of $(A, L)$.
If $(M, P, \sigma)$ is a module over $(A, L)$
and $(P, M, \sig)$ is its corresponding $\U(A, L)$-module.
Then
\[\H_{LP}^n(A, L; M, P)\cong \Ext_{\U(A, L)}^n ((k, 0, 0), (P, M, \sig)).\]
\end{thm}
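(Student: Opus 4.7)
The plan is to compute the right-hand side directly using the projective resolution $\mathbb{P}_\bullet \to (k,0,0)$ built in Lemma \ref{proj resol of char mod}, and then to match the resulting cochain complex termwise with the total complex of the LP bicomplex $C^{\bullet,\bullet}(A, L; M, P)$. Applying $\Hom_{\U(A, L)}(-, (P, M, \sig))$ to $\mathbb{P}_\bullet$ and invoking Lemma \ref{isomorphism of homo}, the $n$-th cochain group is canonically identified with
\[
\Hom_k(\wedge^n, P)\oplus \bigoplus_{i=1}^n \Hom_k(A^i\ot \wedge^{n-i}, M),
\]
which is exactly the $n$-th column of the total complex of $C^{\bullet,\bullet}(A, L; M, P)$. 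The graded pieces therefore agree, and only the differentials remain to be compared.

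The differential on $\mathbb{P}_\bullet$ has the form $(d_n, \vp_n)$ with $\vp_n = \sum_{i+j=n}\delta^H_{i,j}+(-1)^i\delta^V_{i,j}$, so the induced coboundary splits into four pieces which I will match one at a time to $\delta_h, \delta_v, \delta_H, \delta_V$. Dualising $d_n$ on $P_n=\U(L)\ot \wedge^n$ under $\Hom_{\U(L)}(P_n, P)\cong \Hom_k(\wedge^n, P)$ yields the Chevalley--Eilenberg coboundary $\delta_h$ on the $P$-row. Dualising $\delta^V_{i,j}=\delta_i\ot \id$ under $\Hom_{A^e\sharp \U(L)}(Q_{i-1}\ot \wedge^{n-i}, M)\cong \Hom_k(A^i\ot \wedge^{n-i}, M)$ gives the Hochschild-type coboundary $\delta_V$. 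The horizontal Koszul piece $\delta^H_{i,j}=\id\ot d_j$ produces $\delta_H$ once the $\U(L)$-action on $M$ is unpacked via the smash product as in Lemma \ref{module over smash product}. Finally, because the connecting map $\iota_n$ factors through the summand $Q_{-1}\ot \wedge^n=\overline\Omega\ot\wedge^n$, its dual composed with $\sig$ produces precisely the bridging coboundary $(\delta_v f)(a\ot \omega)=\sigma(a\ot f(\omega))$ that links the $P$-row to the $M$-rows.

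The main obstacle will be the explicit sign and term-by-term identification of the dual of $\delta^H_{i,j}$ with $\delta_H$. The Koszul differential $d_j$ contains both a term $\sum (-1)^{k-1}X(x_k)\ot \widehat{x_k}$ and a bracket term $\sum (-1)^{p+q}X\ot [x_p,x_q]\wedge \widehat{x_p}\widehat{x_q}$; when dualised, the smash-product action on $M$ must split the first term into a Lie-action contribution $\{x_l, f(\cdots)\}_\ast$ together with Leibniz-type contributions $-\sum_t f(\cdots\ot \{x_l, a_t\}\ot \cdots)$, exactly reproducing $\delta_H$. Once this verification and the corresponding sign check between $\vp_n$ and the total differential of $C^{\bullet,\bullet}(A, L; M, P)$ are carried out, the theorem follows from the definition of $\H_{LP}^\bullet$ as the total cohomology of $C^{\bullet,\bullet}(A, L; M, P)$.
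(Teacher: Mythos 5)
Your proposal follows essentially the same route as the paper: compute $\Ext_{\U(A,L)}^n((k,0,0),(P,M,\sig))$ from the projective resolution $\mathbb{P}_\bullet$ of Lemma \ref{proj resol of char mod}, identify the cochain groups via Lemma \ref{isomorphism of homo} with the diagonals of $C^{\bullet,\bullet}(A,L;M,P)$, and verify that the dualised differentials $(d_n,\vp_n)^\ast$ match the total differential built from $\delta_h,\delta_v,\delta_H,\delta_V$. Your term-by-term account of how $d_n$, $\delta^V_{i,j}$, $\delta^H_{i,j}$ and $\iota_n$ dualise to the four coboundaries is exactly the ``simple calculation'' the paper leaves implicit, so the argument is correct and coincides with the paper's proof.
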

\begin{proof} Use the notation in Lemma \ref{proj resol of char mod}.
It follows from Lemma \ref{proj resol of char mod} that
\[\Ext_{\U(A,L)}^n((k,0, 0), (P, M, \sig))
\cong \H^n\Hom({\mathbb{P}}_\bullet, (P, M, \sig))\]
for any $\U(A, L)$-module $(P, M, \sig)$. By simple calculation, we know that the following diagram
{\footnotesize \[\begin{CD}
\Hom_{\U(A,L)}(\mathcal{P}_{n-1}, (P, M, \sig))
@>{(d_n, \vp_n)^\ast}>> \Hom_{\U(A,L)}(\mathcal{P}_{n}, (P, M, \sig))\\
@V{\cong}VV  @VV{\cong}V\\
\underset{i+j=n-1}{\oplus}C^{i,j}(A, L; M, P) @>>> \underset{i+j=n}{\oplus}C^{i,j}(A, L; M, P)
\end{CD}\]}
is commutative, where $\mathcal{P}_n=(P_n, \overset{n}{\underset{i=0}{\oplus}}Q_{i-1}\ot \wedge^{n-i}, \iota_n)$,
and the vertical isomorphisms are given by the proof of Lemma \ref{isomorphism of homo}.
It follows that the total complex
of the bicomplex $C^{\bullet, \bullet}(A, L; M, P)$ is isomorphic to the complex
$\Hom({\mathbb{P}}_\bullet, (P, M, \sig))$, and hence
\[\H_{LP}^n(A, L; M, P)\cong \Ext_{\U(A, L)}^n ((k, 0, 0), (P, M, \sig)).\]
\end{proof}

\section{A long exact sequence}

In this section, we give a long exact sequence
and apply it to characterize the Leibniz pair cohomology.

Consider the following bicomplex
{\footnotesize
\begin{align*}
\begin{CD}
&& \cdots && \cdots && \cdots\\
&& @AAA   @AAA @AAA\\
0 @>>> \Hom_k(A^3,M) @>{\delta_H}>> \Hom_k(A^3\ot \wedge^1,M) @>{\delta_H}>> \Hom_k(A^3\ot \wedge^2,M)&\cdots \\
&& @A{\delta_V}AA   @A{\delta_V}AA @AA{\delta_V}A\\
0 @>>> \Hom_k(A^2,M) @>{\delta_H}>> \Hom_k(A^2\ot \wedge^1,M) @>{\delta_H}>> \Hom_k(A^2\ot \wedge^2,M)&\cdots \\
&& @A{\delta_V}AA   @A{\delta_V}AA @AA{\delta_V}A\\
0 @>>> \Hom_k(A,M) @>{\delta_H}>> \Hom_k(A\ot \wedge^1,M) @>{\delta_H}>> \Hom_k(A\ot \wedge^2,M)&\cdots \\
&& @AAA   @AAA @AAA\\
&& 0 && 0 && 0
\end{CD}
\end{align*}}
which is sub-bicomplex of $C^{\bullet,\bullet}(A, L; M, P)$ and denoted by $Q^{\bullet, \bullet}(A, L; M)$.
\begin{lem}\label{QP-mod}
Keeping the above notation, we have
\[\H^n \Tot(Q^{\bullet, \bullet}(A,L; M))\cong \Ext^n_{A^e\sharp \U(L)}(\Omega^1(A), M).\]
\end{lem}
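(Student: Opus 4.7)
The plan is to construct a free $A^e \sharp \U(L)$-resolution of $\Omega^1(A)$ whose $\Hom$-complex into $M$ is canonically isomorphic to $\Tot(Q^{\bullet, \bullet}(A, L; M))$, and then invoke the definition of $\Ext$. The resolution I would use is obtained from the bicomplex $\mathbb{K}_\bullet \ot \mathbb{S}_\bullet$ introduced before Lemma~\ref{proj resol of char mod} by discarding the bottom row $Q_{-1} \ot \wedge^\bullet = \overline{\Omega} \ot \wedge^\bullet$.

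Concretely, I would set $\mathcal{R}_n := \bigoplus_{i + j = n,\ i, j \ge 0} Q_i \ot \wedge^j$ with the differential inherited from $\vp_n$ (restricted so that the vertical component $\delta_0 \ot \id$ on the $Q_0 \ot \wedge^n$ summand, which would land in the deleted $Q_{-1}$, is truncated away), and augment via $Q_0 = A^3 \ot \U(L) \xrightarrow{\delta_0 \ot \e} \Omega^1(A)$, where $\e\colon \U(L) \to k$ is the counit. Each $Q_i \ot \wedge^j$ is a free $A^e \sharp \U(L)$-module, as already observed in the proof of Lemma~\ref{proj resol of char mod}. For exactness of $\mathcal{R}_\bullet \to \Omega^1(A) \to 0$ I would run a K\"unneth-style spectral sequence on the truncated double complex: taking vertical (Hochschild) homology first, each column $\cdots \to Q_1 \ot \wedge^j \to Q_0 \ot \wedge^j \to 0$ collapses to $\Omega^1(A) \ot \U(L) \ot \wedge^j$ concentrated in vertical degree zero, and the resulting $E^1$-page is the Koszul complex $\Omega^1(A) \ot_k \mathbb{K}_\bullet$, whose homology is $\Omega^1(A) \ot k = \Omega^1(A)$ in total degree zero.

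Next, applying $\Hom_{A^e \sharp \U(L)}(-, M)$ and using the free-module identification $\Hom_{A^e \sharp \U(L)}(Q_i \ot \wedge^j, M) \cong \Hom_k(A^{i+1} \ot \wedge^j, M)$ from Lemma~\ref{isomorphism of homo}, one obtains $\Hom_{A^e \sharp \U(L)}(\mathcal{R}_n, M) \cong \bigoplus_{i + j = n} \Hom_k(A^{i+1} \ot \wedge^j, M)$, which is exactly the degree-$n$ piece of $\Tot(Q^{\bullet, \bullet}(A, L; M))$ after the reindexing $i \mapsto i+1$. The $\Hom$-dual of the Hochschild differential on $\mathbb{S}_\bullet$ is precisely the coboundary $\delta_V$, and the dual of the Koszul differential $\id \ot d_j$ is precisely $\delta_H$, so the two cochain complexes coincide; the verification is exactly parallel to the one in the proof of Theorem~\ref{char module}, restricted to the sub-bicomplex of $C^{\bullet, \bullet}$ obtained by deleting the $\Hom_k(\wedge^j, P)$ row.

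The main obstacle is purely bookkeeping: ensuring that truncating the $Q_{-1}$ row on the resolution side matches passing from the augmentation $\overline{\Omega} = \Omega^1(A) \ot \U(L)$ to $\Omega^1(A)$ via the counit $\e$, and that sign conventions in the totalization agree. Once these identifications are in place, passing to $n$-th cohomology yields
\[
\H^n \Tot(Q^{\bullet, \bullet}(A, L; M)) \cong H^n \Hom_{A^e \sharp \U(L)}(\mathcal{R}_\bullet, M) \cong \Ext^n_{A^e \sharp \U(L)}(\Omega^1(A), M),
\]
as desired.
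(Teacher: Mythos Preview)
Your proposal is correct and follows essentially the same route as the paper: both construct the free $A^e\sharp\U(L)$-resolution of $\Omega^1(A)$ by tensoring the standard (bar) resolution of $\Omega^1(A)$ over $A^e$ with the Koszul resolution of $k$ over $\U(L)$, verify exactness via a K\"unneth-type argument, and then identify $\Hom_{A^e\sharp\U(L)}(-,M)$ of this resolution with $\Tot(Q^{\bullet,\bullet}(A,L;M))$ using the free-module adjunction. The only cosmetic difference is that you describe the resolution as a truncation of the earlier bicomplex $\mathbb{K}_\bullet\ot\mathbb{S}_\bullet$, whereas the paper rebuilds it directly.
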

\begin{proof}
Consider the standard resolution of $A^e$-module $\Omega^1(A)$
\[\cdots \to A^{i+3}\xr{\delta_i} A^{i+2} \to \cdots \to A^4\xr{\delta_1} A^3\to 0,\]
and the Koszul resolution of trivial $\U(L)$-module $k$
\[\cdots \to \U(L)\ot \wedge^j \xr{d_j}\U(L)\ot \wedge^{j-1}\to \cdots \to\U(L)\xr{d_1} \U(L) \to 0.\]
Taking the tensor product of these resolutions, we obtain the following bicomplex,
denoted by $Q_{\bullet, \bullet}(A, L)$,
{\footnotesize
\begin{align*}
\begin{CD}
&& \cdots  && \cdots &&\cdots\\
&&          @VVV                     @VVV                          @VVV\\
0@<<<A^5\ot \U(L)        @<\delta^H_{3,0}<<     A^5\ot \U(L)\ot\wedge^1   @<\delta^H_{3,1}<< A^5\ot \U(L)\ot\wedge^2        @<<< \cdots\\
&&          @V{\delta^V_{2,0}}VV                     @V{\delta^V_{2,1}}VV                          @V{\delta^V_{2,2}}VV\\
0@<<<A^4\ot \U(L)        @<\delta^H_{2,0}<<     A^4\ot \U(L)\ot\wedge^1   @<\delta^H_{2,1}<< A^4\ot \U(L)\ot\wedge^2        @<<< \cdots\\
&&          @V{\delta^V_{1,0}}VV                     @V{\delta^V_{1,1}}VV                          @V{\delta^V_{1,2}}VV\\
0@<<< A^3\ot \U(L)        @<\delta^H_{1,0}<<    A^3\ot \U(L)\ot\wedge^1   @<\delta^H_{1,1}<< A^3\ot \U(L)\ot\wedge^2        @<<< \cdots\\
&&          @VVV                     @VVV                          @VVV\\
&& 0  && 0 &&0
\end{CD}
\end{align*}}
The following argument is similar to the calculation of quasi-Poisson cohomology groups in \cite[Theorem 3.7]{BY1}.
Since $A^{i+2}\ot \U(L)\ot \wedge^j$ is free as an $A^e\sharp \U(L)$-module for $i, j\ge 0$, we know
$\mathrm{H}_n(\Tot(Q_{\bullet, \bullet}(A, L)))=0$ for $n\ge 1$ and
$\mathrm{H}_0(\Tot(Q_{\bullet, \bullet}(A, L)))=\Omega^1(A)$.
The complex $\Tot(Q_{\bullet, \bullet}(A, L))$ is a projective resolution of $\Omega^1(A)$
as an $A^e\sharp \U(L)$-module. Applying the functor $\Hom_{A^e\sharp  \U(L)}(-, M)$ on $Q_{\bullet, \bullet}(A, L)$ and the $k$-linear isomorphism
\[\Hom_{A^e\sharp \U(L)}(A^{i+2}\ot \U(L)\ot \wedge^j, M)\cong \Hom_k(A^i\ot \wedge^j, M),\]
we immediately get the bicomplex $Q^{\bullet, \bullet}(A,L; M)$. Consequently,
\[\H^n \Tot(Q^{\bullet, \bullet}(A, L; M))\cong\Tot\H^n Q^{\bullet, \bullet}(A, L; M) \cong \Ext^n_{A^e\sharp \U(L)}(\Omega^1(A), M).\]
\end{proof}

\begin{rem} Applying a general result for smash products, see \cite[Theorem 5.2]{BY1} for
details, we have a Grothendieck spectral sequence
\begin{align}\label{spec seq for Omega}
\Ext_{\U(L)}^q(k, \Ext_{A^e}^p(\Omega^1(A), M))\Longrightarrow
\Ext^{p+q}_{A^e\sharp  \U(L)} (\Omega^1(A), M).
\end{align}
For some special cases, it can be used to calculate the Ext-group at the right side.
\end{rem}

\begin{thm}\label{les for LP coh}
Let $(A, L)$ be a Leibniz pair and $(M, P, \sigma)$
be a module over $(A, L)$. Then we have the long exact sequence
\begin{align*}
0 &\to \Hom_{A^e\sharp \U(L)}(\Omega^1(A), M) \to \H_{LP}^0(A, L; M, P)
\to \HL^0(L, P)\\
&\to \Ext^1_{A^e\sharp \U(L)}(\Omega^1(A), M) \to \H_{LP}^1(A, L; M, P)
\to \HL^1(L, P)\to \cdots\\
& \to \H_{LP}^n(A, L; M, P)
\to  \HL^n(L, P)\to \Ext^{n+1}_{A^e\sharp \U(L)}(\Omega^1(A),M)\to \cdots
\end{align*}
where $\HL^n(L, P)$ is the $n$-th cohomology group of the Lie algebra $L$ with coefficients in $P$.
\end{thm}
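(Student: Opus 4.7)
The strategy is to produce the long exact sequence as the cohomology long exact sequence associated to a short exact sequence of cochain complexes extracted from the bicomplex $C^{\bullet,\bullet}(A,L;M,P)$ that defines $\H_{LP}^\bullet(A,L;M,P)$.

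The first step is a structural decomposition of $C^{\bullet,\bullet}(A,L;M,P)$. Its bottom row $\Hom_k(\wedge^\bullet L, P)$ equipped with the differential $\delta_h$ is precisely the Chevalley-Eilenberg cochain complex $C_{CE}^\bullet(L,P)$. The complementary strip consisting of the rows $\Hom_k(A^i\otimes\wedge^\bullet L, M)$ with $i\ge 1$, after a downward re-indexing in the $A$-direction, is exactly the bicomplex $Q^{\bullet,\bullet}(A,L;M)$ studied in Lemma \ref{QP-mod}. Since the connecting vertical differential $\delta_v$ sends the bottom row up into the strip and never in the reverse direction, this strip is a sub-bicomplex of $C^{\bullet,\bullet}$ while the Chevalley-Eilenberg row is the corresponding quotient bicomplex.

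Passing to total complexes (and accounting for the degree shift implied by the identification $Q^{p,q}\cong C^{p+1,q}$) yields a short exact sequence of cochain complexes
\[
0 \longrightarrow \Tot Q^{\bullet,\bullet}(A,L;M) \longrightarrow \Tot C^{\bullet,\bullet}(A,L;M,P) \longrightarrow C_{CE}^\bullet(L,P) \longrightarrow 0.
\]
The associated long exact sequence in cohomology then assembles the three families of groups appearing in the theorem: Lemma \ref{QP-mod} identifies the cohomology of $\Tot Q^{\bullet,\bullet}$ with $\Ext^\bullet_{A^e\sharp\U(L)}(\Omega^1(A),M)$; the cohomology of $\Tot C^{\bullet,\bullet}$ is $\H_{LP}^\bullet(A,L;M,P)$ by definition; and the cohomology of $C_{CE}^\bullet(L,P)$ is $\HL^\bullet(L,P)$ by the standard description of Lie algebra cohomology. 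Substituting these identifications produces the stated sequence.

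Finally, I would check that the connecting map $\HL^n(L,P)\to \Ext^{n+1}_{A^e\sharp\U(L)}(\Omega^1(A),M)$ is the expected one via the snake-lemma recipe: a representing cocycle $f\in\Hom_k(\wedge^n L,P)$ lifts trivially to the bottom-row component of $\Tot^n C^{\bullet,\bullet}$, and applying the total differential produces the element $(a\otimes\omega)\mapsto \sigma(a\otimes f(\omega))$ in $\Hom_k(A\otimes\wedge^n L, M)$, so the connecting map is essentially contraction with $\sigma$. The main technical obstacle is keeping the degree bookkeeping straight: the inclusion of total complexes comes with a shift of one in total degree because of the re-indexing $Q^{p,q}\cong C^{p+1,q}$, and matching this shift against the indexing conventions of $\H_{LP}^\bullet$ so that the $\Ext$-indices land in the positions required by the statement is the subtlest part of the argument.
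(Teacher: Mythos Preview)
Your argument is correct and follows exactly the paper's approach: the proof there is simply the observation that $\Tot Q^{\bullet,\bullet}(A,L;M)$ is a subcomplex of $\Tot C^{\bullet,\bullet}(A,L;M,P)$ with quotient the Chevalley--Eilenberg complex $\Hom_k(\wedge^\bullet,P)$, followed by an appeal to the long exact sequence theorem and Lemma~\ref{QP-mod}. Your additional comments on the connecting map and on the degree shift go beyond what the paper spells out, but they are consistent with it.
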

\begin{proof}
By the bicomplex used to define Leibniz pair cohomology, we have a short exact sequence of complexes
\begin{align*}
0\to \Tot(Q^{\bullet, \bullet}(A, L; M))
\to \Tot(C^{\bullet, \bullet}(A, L; M, P))\to \Hom_k(\wedge^{\bullet}, P)\to 0.
\end{align*}
By the long exact sequence theorem and Lemma \ref{QP-mod}, we have the long exact sequence.
\end{proof}

There are some simple observations about LP-cohomology group from Theorem \ref{les for LP coh}.

\begin{cor}
Let $(A, L)$ be a Leibniz pair and $(M, P)$ be a module over $(A, L)$.
If $A, L$ are finite-dimensional, and
${\rm gl.dim} A<\infty$, then $\H_{LP}^n(A, L; M, P)=0$ for sufficiently large $n$.
\end{cor}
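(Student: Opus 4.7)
The plan is to feed the hypotheses into the long exact sequence of Theorem~\ref{les for LP coh}. A typical segment reads
\[
\Ext^n_{A^e\sharp \U(L)}(\Omega^1(A), M) \to \H_{LP}^n(A, L; M, P) \to \HL^n(L, P) \to \Ext^{n+1}_{A^e\sharp \U(L)}(\Omega^1(A), M),
\]
so by exactness it suffices to show that both $\HL^n(L, P) = 0$ and $\Ext^n_{A^e\sharp \U(L)}(\Omega^1(A), M) = 0$ for all $n$ large enough.

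The first vanishing is immediate: since $L$ is finite-dimensional, the Chevalley--Eilenberg cochain complex $\Hom_k(\wedge^\bullet L, P)$ is supported in degrees $\le \dim L$, and hence $\HL^n(L, P) = 0$ whenever $n > \dim L$.

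For the second, I would invoke the Grothendieck spectral sequence
\[
E_2^{p,q} = \Ext^q_{\U(L)}\bigl(k,\, \Ext^p_{A^e}(\Omega^1(A), M)\bigr) \Longrightarrow \Ext^{p+q}_{A^e\sharp \U(L)}(\Omega^1(A), M)
\]
recorded in the remark following Lemma~\ref{QP-mod}. By the same Chevalley--Eilenberg argument applied to $\Ext^q_{\U(L)}(k,-) \cong \HL^q(L, -)$, one has $E_2^{p,q} = 0$ for $q > \dim L$. It therefore remains to check that $\Omega^1(A)$ has finite projective dimension as an $A^e$-module.

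Bounding ${\rm pd}_{A^e}\Omega^1(A)$ is the main obstacle, and is exactly where the hypotheses on $A$ enter. From the canonical short exact sequence $0 \to \Omega^1(A) \to A^e \to A \to 0$ of $A^e$-modules, one gets ${\rm pd}_{A^e}\Omega^1(A) \le {\rm pd}_{A^e}A$. Since $k$ has characteristic $0$ and $A$ is finite-dimensional, the semisimple quotient $A/{\rm rad}\,A$ is separable over $k$, and a standard result on finite-dimensional algebras then gives ${\rm gl.dim}\,A^e \le 2\,{\rm gl.dim}\,A < \infty$; in particular $d := {\rm pd}_{A^e}\Omega^1(A)$ is finite. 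Thus the $E_2$-page of the spectral sequence is supported inside the rectangle $0 \le p \le d$, $0 \le q \le \dim L$, so $\Ext^n_{A^e\sharp \U(L)}(\Omega^1(A), M) = 0$ for $n > d + \dim L$. Combining with $\HL^n(L, P) = 0$ for $n > \dim L$ and reading off the long exact sequence yields $\H_{LP}^n(A, L; M, P) = 0$ for every $n > d + \dim L$.
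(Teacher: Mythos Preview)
Your proof is correct and follows essentially the same route as the paper: both feed the long exact sequence of Theorem~\ref{les for LP coh} with the vanishing of $\HL^n(L,P)$ for $n>\dim L$ and of $\Ext^n_{A^e\sharp\U(L)}(\Omega^1(A),M)$ for large $n$, the latter obtained from the Grothendieck spectral sequence together with a finite bound on ${\rm pd}_{A^e}\Omega^1(A)$. The only cosmetic difference is that the paper invokes the sharper equality ${\rm pd}_{A^e}A={\rm gl.dim}\,A$ (Happel) directly, whereas you pass through the coarser but equally sufficient bound ${\rm gl.dim}\,A^e\le 2\,{\rm gl.dim}\,A$ via separability of $A/{\rm rad}\,A$.
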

\begin{proof}
If the associative algebra $A$ is finite-dimensional and $\text{gl.dim} A<\infty$,
then there exists $p>0$ such that $\Ext_{A^e}^n (\Omega^1(A), M)=0$ for all $n\ge p$
since $\text{proj.dim} {_{A^e}}A=\text{gl.dim}A$, see \cite[Section 1.5]{Ha}.
On the other hand, $\wedge^q=0$ and hence $\HL^q(L, N)=\Ext_{\U(L)}^q(k, N)=0$
for any $q>\dim_k(L)$ and any Lie module $N$ over $L$.
In this case, the spectral sequence \eqref{spec seq for Omega} is congruent,
and $\Ext_{A^e\sharp \U(L)}^n(\Omega^1(A), M)=0$
for large $n$. It follows from the long exact sequence in Theorem \ref{les for LP coh} that
$\H_{LP}^n(A, L; M, P)=0$ for sufficiently large $n$.
\end{proof}

\begin{exm}
Let $A=\mathbb{M}_2(k)$ be the $2\times 2$ full matrix algebra, $L=\sl$ be the symplectic algebra, and $\mu(x)(a)=[x,a]=xa-ax$ for $x\in L, a\in A$. Clearly, $(A, L)$ is a Leibniz pair.
We have the following simple facts.

$\Ext_{A^e}^p(\Omega^1(A), A)=\HH^{p+1}(A)=0$ for $p\ge 1$.

$\Hom_{A^e}(\Omega^1(A), A)=\Der(A)\cong \sl=L$. \\
By the spectral sequence \eqref{spec seq for Omega}, we have
\[\Ext_{A^e\sharp \U(L)}^n(\Omega^1(A), A)\cong \Ext_{\U(L)}^n(k, L)=\HL^n (L)=0\]
for any $n\ge 0$, where the last equality follows from \cite[Chapter VII, Proposition 6.1 and 6.3]{Hu}.
It follows from Theorem \ref{les for LP coh} that $\H_{LP}^n(A, L)=0$ for any $n\ge 0$.
\end{exm}


\vspace{0.5cm}

 {\footnotesize \noindent Yan-Hong Bao\\
  School of Mathematical Sciences, Anhui University, Hefei, 230601, Anhui, PR China.\\
  School of Mathematical Sciences, University of Sciences and Technology in China, Hefei, 230026, Anhui, PR China.\\
  Email: yhbao@ustc.edu.cn}

{\footnotesize \noindent Yu Ye\\
  School of Mathematical Sciences, University of Sciences and Technology in China, Hefei, 230026, PR China.\\
  Wu Wen-Tsun Key Laboratory of Mathematics, USTC, Chinese Academy of Sciences,
  Hefei, 230026, PR China. \\
  Email: yeyu@ustc.edu.cn}

\end{document}